\documentclass[10pt]{elsarticle}
\usepackage[utf8]{inputenc}
\usepackage{amsmath}
\usepackage{amssymb}
\usepackage{amsfonts}
\usepackage{amsthm}

\journal{Arxiv}

\newtheorem{lemma}{Lemma}
\newtheorem{corollary}{Corollary}
\newtheorem{problem}{Problem}
\newtheorem{proposition}{Proposition}

\begin{document}

\begin{frontmatter}
\title{On balanced characteristic functions of canonical cliques in Paley graphs of square order}

\author[01]{Sergey Goryainov}
\ead{sergey.goryainov3@gmail.com}

\author[02]{Huiqiu Lin}
\ead{huiqiulin@126.com}

\address[01] {Department of Mathematics, Chelyabinsk State University, Brat'ev Kashirinyh st. 129, Chelyabinsk  454021, Russia}
\address[02]{Department of Mathematics, East China University of Science and Technology,
Shanghai 200237, P.R. China}

\begin{abstract}
In this paper we prove that balanced characteristic functions of canonical cliques in a Paley graph of square order $P(q^2)$ span the $\frac{-1+q}{2}$-eigenspace of the graph. This is the first of two steps to a second proof of the analogue of Erd\"os-Ko-Rado theorem for Paley graphs of square order (the first proof was given by A. Blokhuis in 1984).
\end{abstract}

\begin{keyword}
Paley graph; maximum clique; balanced characteristic function; Erd\"os-Ko-Rado theorem
\vspace{\baselineskip}
\MSC[2010] 05C25 \sep 11T99 \sep 05E30 \sep 51E15
\end{keyword}

\end{frontmatter}

\section{Introduction}
\subsection{Regular cliques in strongly regular graphs}
A clique $S$ in a regular graph is called \emph{regular} if every vertex that is not in $S$ has the same number of neighbours in $S$. The following lemma gives a characterisation of regular cliques in strongly regular graphs.

\begin{lemma}[{\cite[Proposition 1.3.2(2)]{BCN89}}]\label{HoffmanBound}
Let $\Gamma$ be a strongly regular graph with parameters $(v,k,\lambda,\mu)$ and with smallest eigenvalue $-m$.
Let $S$ be a clique in $\Gamma$. Then $|S| \le 1+\frac{k}{m}$, with equality if and only if every vertex that is not in $S$ has the same number of neighbours (namely $\frac{\mu}{m}$) in $S$.
\end{lemma}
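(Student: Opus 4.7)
My plan would be to prove Lemma~\ref{HoffmanBound} via the spectral structure of the strongly regular graph $\Gamma$. Since $\Gamma$ has exactly three distinct eigenvalues $k$, $r$, $-m$, with orthogonal primitive idempotents $E_0 = \tfrac{1}{v}J$, $E_1$, $E_2$, one has $I = E_0 + E_1 + E_2$ and $A = kE_0 + rE_1 - mE_2$. For the clique $S$ of size $c$ with characteristic vector $\chi$, I would combine the identities $\chi^T\chi = c$, $\chi^T J \chi = c^2$, and $\chi^T A \chi = c(c-1)$ (the last because $S$ is a clique) into a $2\times 2$ linear system in the non-negative quantities $\chi^T E_1 \chi$ and $\chi^T E_2 \chi$.

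The key step is to isolate the inequality $\chi^T E_2 \chi \ge 0$. Multiplying the trace identity by $r$ and subtracting the identity for $\chi^T A \chi$ eliminates $\chi^T E_1 \chi$ and yields $(r+m)\,\chi^T E_2 \chi = c\bigl[(r-c+1) + (k-r)c/v\bigr]$. Requiring the right-hand side to be non-negative rearranges to $c \le v(r+1)/(v-k+r)$.

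To convert this bound into the advertised form $1 + k/m$, I would invoke the strongly regular identities $(k-r)(k+m)=\mu v$ and $-rm = \mu - k$ (the latter being Vieta's formula for the quadratic satisfied by the non-principal eigenvalues), which together reduce $v(r+1)/(v-k+r)$ algebraically to $(m+k)/m$. Equality forces $\chi^T E_2 \chi = 0$, so $\chi$ lies in the span of $\mathbf{1}$ and the $r$-eigenspace; consequently $A\chi = r\chi + (k-r)(c/v)\,\mathbf{1}$, and evaluating at any $v \notin S$ shows the common number of neighbours of $v$ in $S$ is $(k-r)c/v$, which simplifies to $\mu/m$ via the same identities. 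The main obstacle here is not conceptual but computational: the algebraic reduction uses two strongly regular identities simultaneously and requires careful bookkeeping; once that is done, the equality characterisation falls out of the same manipulations.
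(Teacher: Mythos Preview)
The paper does not prove Lemma~\ref{HoffmanBound}; it is quoted verbatim from \cite[Proposition~1.3.2(2)]{BCN89} as a known result, so there is no in-paper argument to compare against. Your spectral approach via the primitive idempotents $E_0,E_1,E_2$ is the standard Delsarte--Hoffman proof and is correct: the identities you list do yield $(r+m)\,\chi^T E_2\chi = c\bigl[(r-c+1)+(k-r)c/v\bigr]$, and the simplification $v(r+1)/(v-k+r)=(k+m)/m$ follows exactly from $rm=k-\mu$ and $(k-r)(k+m)=\mu v$ as you say.

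Two small points. First, you only treat one direction of the ``if and only if'': from equality you deduce $E_2\chi=0$ and hence regularity with nexus $\mu/m$. For the converse, if every vertex outside $S$ has a common number $t$ of neighbours in $S$, then $A\chi=(c-1-t)\chi+t\mathbf{1}$, so the balanced vector $\chi-\tfrac{c}{v}\mathbf{1}$ is a genuine eigenvector with eigenvalue $c-1-t\in\{r,-m\}$; the case $c-1-t=-m$ forces $t=c-1+m>c$, which is impossible, so $c-1-t=r$, whence $E_2\chi=0$ and equality holds. Second, you use the symbol $v$ both for the number of vertices and for a vertex outside $S$; rename the latter to avoid the clash.
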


\subsection{Strongly regular block graphs of orthogonal arrays}
We consider block graphs of orthogonal arrays (see \cite[p. 224]{GR01} and \cite[Section 5.5]{GM15}).

An orthogonal array $OA(m, n)$ is an $m \times n^2$ array with entries from an $n$-element set $X$
with the property that the columns of any $2 \times n^2$ subarray consist of all $n^2$ possible pairs.

\begin{lemma}[{\cite[Theorem 5.5.1]{GM15}}]\label{OAGraph}
If $OA(m, n)$ is an orthogonal array where $m < n+1$, then
its block graph $X_{OA(m,n)}$ is strongly regular with parameters
$$
(n^2, m(n - 1), (m - 1)(m - 2) + n - 2, m(m - 1));
$$
the eigenvalues of $X_{OA(m,n)}$ are $m(n - 1)$, $n-m$ and $-m$ with multiplicities $1$, $m(n - 1)$ and $(n - 1)(n + 1 - m)$, respectively.
\end{lemma}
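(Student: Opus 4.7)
The plan is to unpack the definition of the block graph $X_{OA(m,n)}$ and then verify the five numbers $v, k, \lambda, \mu$ and the spectrum by straightforward counting, leaning on the defining property of the orthogonal array at every step. Recall that the vertices of $X_{OA(m,n)}$ are the $n^2$ columns of the array, with two columns declared adjacent whenever they agree in at least one row. The key structural observation is that two distinct columns cannot agree in two different rows $i, j$, for otherwise the $2 \times n^2$ subarray on rows $i, j$ would contain a repeated pair, contradicting the orthogonal array axiom. Hence \emph{adjacent columns agree in exactly one row}, and this governs everything that follows.

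Given this, I would compute $k$ and $\lambda$ as follows. For any column $C$ and any row $i$, exactly $n-1$ other columns share $C$'s entry in row $i$, because with any second row fixed, the $n$ pairs starting with that entry must each appear as a column. By disjointness, $C$ has $k = m(n-1)$ neighbours. For two adjacent columns $C_1, C_2$ with unique agreement row $i$, a common neighbour $C'$ either (a) shares $C_1$'s entry in row $i$, contributing $n-2$ columns, or (b) matches $C_1$ in some row $i_1 \neq i$ and $C_2$ in some row $i_2 \neq i$, where necessarily $i_1 \neq i_2$ (else $C_1, C_2$ would agree on $i_1$ as well); for each of the $(m-1)(m-2)$ ordered pairs $(i_1, i_2)$ the orthogonal array axiom on rows $(i_1, i_2)$ pins down $C'$ uniquely. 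Summing yields $\lambda = (m-1)(m-2) + n - 2$. An entirely analogous argument for two non-adjacent columns, where every common neighbour falls into case (b) with $i_1, i_2$ ranging over all ordered pairs of distinct rows, gives $\mu = m(m-1)$.

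It then remains to determine the spectrum. As a connected $k$-regular strongly regular graph, $X_{OA(m,n)}$ has non-principal eigenvalues equal to the roots of $\theta^2 + (\mu - \lambda)\theta + (\mu - k) = 0$. Substituting the values above and noting that the discriminant simplifies to $n^2$, the roots are $\theta = n - m$ and $\theta = -m$. The multiplicities $f$ and $g$ are forced by the two linear equations $1 + f + g = n^2$ and $k + f(n-m) + g(-m) = \mathrm{tr}(A) = 0$, which solve to $f = m(n-1)$ and $g = (n-1)(n+1-m)$; the hypothesis $m < n+1$ is precisely what is needed to keep $g$ positive, so that the graph is genuinely strongly regular rather than the degenerate $K_{n^2}$.

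The main obstacle is not technical but book-keeping: getting the case analysis for $\lambda$ and $\mu$ correct without double-counting. Both cases rest on the same two consequences of the OA property, namely that (i) any fixed row-value pair appears in exactly $n$ columns and (ii) two distinct columns agree in at most one row. Once these are isolated, the rest of the argument is elementary arithmetic and a single quadratic.
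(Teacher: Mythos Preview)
Your argument is correct and is the standard derivation: the key observation that two distinct columns of an $OA(m,n)$ agree in at most one row, the case split for $\lambda$ and $\mu$, and the routine extraction of the spectrum from the quadratic $\theta^2+(\mu-\lambda)\theta+(\mu-k)=0$ are all carried out without error.

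There is nothing to compare against, however: the paper does not prove this lemma at all. It is quoted as \cite[Theorem 5.5.1]{GM15} and used as a black box, so your write-up simply supplies what the paper outsources to Godsil--Meagher. The argument you give is essentially the one found there.
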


Let $S_{ri}$ be the set of columns of $OA(m, n)$ that have the entry $i$ in row $r$. These sets are cliques, and since each element of the $n$-element set $X$ occurs
exactly $n$ times in each row, the size of $S_{ri}$ is $n$ for all $i$ and $r$. These cliques
are called the \emph{canonical cliques} in the block graph of the orthogonal array. However, there may be non-canonical cliques for the block graph of an orthogonal array $OA(m,n)$ in the case $n \le (m-1)^2$ (see \cite[Section 5.5]{GM15}).
Note that a clique of size $n$ in the block graph of an orthogonal array $OA(m,n)$ is necessarily regular in view of Lemma \ref{HoffmanBound} (every vertex that is not in the clique has $m-1$ neighbours in the clique).

\subsection{Finite fields of square order}
For any odd prime power $q$, the field $\mathbb{F}_{q^2}$ can be naturally viewed as the affine plane $A(2,q)$.
Let $d$ be a non-square in $\mathbb{F}_{q}^*$. The elements of the finite field of order $q^2$ can be considered as $\mathbb{F}_{q^2} =
\{x+y\alpha~|~x,y \in \mathbb{F}_q\}$, where $\alpha$ is a root of the polynomial $f(t) = t^2 - d$.
Since $\mathbb{F}_{q^2}$ is a $2$-dimensional vector space over $\mathbb{F}_q$, we can assume that the points of $A(2,q)$ are the elements of
$\mathbb{F}_{q^2}$ and a line $l$ is presented by the elements $\{x_1+y_1\alpha + c(x_2+y_2\alpha)\}$, where $x_1+y_1\alpha \in \mathbb{F}_{q^2}$,
$x_2+y_2\alpha \in \mathbb{F}_{q^2}^*$
are fixed and $c$ runs over $\mathbb{F}_q$. The element $x_2+y_2\alpha$ is called the \emph{slope} of the line $l$.
A line $l$ is called \emph{quadratic} (resp. \emph{non-quadratic}), if its slope is a square (resp. non-square) in $\mathbb{F}_{q^2}^*$.
Note that the slope is defined up to multiplication by a constant $c \in \mathbb{F}_{q}^*$.

\subsection{Paley graphs of square order}
Let $q_1$ be a prime power, $q_1 \equiv 1(4)$. Let $S_1$ be the set of squares in $\mathbb{F}_{q_1}^*$. The Paley graph $P(q_1)$ is defined on $\mathbb{F}_{q_1}^*$, where two elements $\gamma_1,\gamma_2$ are adjacent whenever $\gamma_1-\gamma_2 \in S_1$. In this paper we consider Paley graphs of square order $q_1 = q^2$, where $q$ is an odd prime power.

\begin{lemma}[{\cite[Section 10.3]{GR01}}]
Given an odd prime power $q$, the non-principal eigenvalues of the Paley graph $P(q^2)$ are $\frac{-1-q}{2}$ and $\frac{-1+q}{2}$.
\end{lemma}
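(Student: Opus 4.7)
The plan is to deduce the eigenvalues from the strongly regular structure of $P(q^2)$ together with the classical formula expressing the non-principal eigenvalues of such a graph in terms of its parameters $(v,k,\lambda,\mu)$.

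First, I would establish that $P(q^2)$ is strongly regular with parameters
\[
\left(q^2,\ \frac{q^2-1}{2},\ \frac{q^2-5}{4},\ \frac{q^2-1}{4}\right).
\]
The number of vertices is $|\mathbb{F}_{q^2}| = q^2$, and regularity of degree $\frac{q^2-1}{2}$ follows from $|S_1| = \frac{q^2-1}{2}$. For $\lambda$ and $\mu$, vertex- and edge-transitivity (via the affine maps $x \mapsto ax+b$ with $a \in S_1$) reduce the verification to a single count: for a fixed non-zero square $s$ (respectively non-square $n$), the number of $t \in S_1$ with $t - s \in S_1$ (respectively $t - n \in S_1$). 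Using $q^2 \equiv 1 \pmod 4$, so that $-1$ is a square in $\mathbb{F}_{q^2}^*$, these counts simplify to the stated values, and in particular the complement of $P(q^2)$ is isomorphic to $P(q^2)$ itself.

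Second, I would invoke the standard quadratic formula for the non-principal eigenvalues of a connected strongly regular graph,
\[
\theta_{1,2} \;=\; \frac{(\lambda - \mu) \pm \sqrt{(\lambda - \mu)^2 + 4(k - \mu)}}{2}.
\]
Substituting the parameters above gives $\lambda - \mu = -1$ and discriminant $1 + (q^2 - 1) = q^2$, hence $\theta_{1,2} = \frac{-1 \pm q}{2}$, matching the claim.

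The only real obstacle is the slightly tedious bookkeeping in the parameter count for $\lambda$ and $\mu$. An alternative route that bypasses this counting is to exploit the Cayley-graph structure of $P(q^2)$ on $(\mathbb{F}_{q^2},+)$: its eigenvalues are the character sums $\sum_{s \in S_1} \chi(s)$ as $\chi$ ranges over the non-trivial additive characters of $\mathbb{F}_{q^2}$. Each such sum equals $\tfrac{1}{2}\bigl(G(\chi) - 1\bigr)$, where $G(\chi) = \sum_{x \in \mathbb{F}_{q^2}^*} \chi(x^2)$ is the quadratic Gauss sum over $\mathbb{F}_{q^2}$; this sum has absolute value $q$ and, because $q^2 \equiv 1 \pmod 4$, equals $\pm q \in \mathbb{R}$, directly producing the two values $\frac{-1 \pm q}{2}$.
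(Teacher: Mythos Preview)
The paper does not prove this lemma at all: it is stated with a citation to \cite[Section 10.3]{GR01} and no proof is given. So there is nothing in the paper to compare against. Your primary argument---computing the strongly regular parameters of $P(q^2)$ and then applying the quadratic formula $\theta_{1,2}=\tfrac{1}{2}\bigl((\lambda-\mu)\pm\sqrt{(\lambda-\mu)^2+4(k-\mu)}\bigr)$---is correct and is essentially the approach taken in the cited reference, so it is an entirely appropriate proof here.

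One small inaccuracy in your alternative route: with your definition $G(\chi)=\sum_{x\in\mathbb{F}_{q^2}^*}\chi(x^2)$, each nonzero square is hit exactly twice, so $\sum_{s\in S_1}\chi(s)=\tfrac{1}{2}G(\chi)$, not $\tfrac{1}{2}(G(\chi)-1)$. Moreover $G(\chi)$ itself does not have absolute value $q$; rather $G(\chi)=g(\eta,\chi)-1$ where $g(\eta,\chi)=\sum_{x\ne 0}\eta(x)\chi(x)$ is the standard quadratic Gauss sum, and it is $g(\eta,\chi)$ that equals $\pm q$. Your two slips cancel and the final values $\tfrac{-1\pm q}{2}$ come out right, but if you keep this alternative argument you should correct the intermediate identities.
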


\begin{lemma}[{\cite[Lemma 5.9.1]{GM15}}]\label{CliqueNumberForPaleyGraphs}
Given an odd prime power $q$, the clique number $\omega(P(q^2))$ of the Paley graph $P(q^2)$ is equal to $q$.
\end{lemma}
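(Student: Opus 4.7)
The plan is to establish the two inequalities $\omega(P(q^2)) \le q$ and $\omega(P(q^2)) \ge q$ separately, using only material already recorded in the excerpt.

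For the upper bound, I would apply the clique bound of Lemma~\ref{HoffmanBound} to $P(q^2)$, using that it is strongly regular with valency $k=\frac{q^2-1}{2}$ and smallest eigenvalue $-m=-\frac{q+1}{2}$. A direct substitution gives
\[
1+\frac{k}{m} \;=\; 1+\frac{(q^2-1)/2}{(q+1)/2} \;=\; 1+(q-1) \;=\; q,
\]
so every clique in $P(q^2)$ has at most $q$ vertices.

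For the lower bound, I would exhibit an explicit clique of size $q$, namely the subfield $\mathbb{F}_q$ viewed as a subset of the vertex set $\mathbb{F}_{q^2}$. The adjacency condition requires, for any distinct $a,b \in \mathbb{F}_q$, that $a-b$ be a non-zero square in $\mathbb{F}_{q^2}$. By Euler's criterion this amounts to $(a-b)^{(q^2-1)/2}=1$. Since $a-b \in \mathbb{F}_q^*$ satisfies $(a-b)^{q-1}=1$ and $(q+1)/2$ is a positive integer (as $q$ is odd), I would compute
\[
(a-b)^{(q^2-1)/2} \;=\; \bigl((a-b)^{q-1}\bigr)^{(q+1)/2} \;=\; 1,
\]
which shows $\mathbb{F}_q\setminus\{0\}\subseteq (\mathbb{F}_{q^2}^*)^2$ and hence that $\mathbb{F}_q$ is a clique of the required size.

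Combining the two bounds yields $\omega(P(q^2))=q$. The argument reduces to two essentially one-line checks: the Hoffman-bound arithmetic and the Euler-criterion verification that every element of $\mathbb{F}_q^*$ is a square in $\mathbb{F}_{q^2}^*$, so I do not foresee a serious obstacle. As a bonus, the construction saturates Lemma~\ref{HoffmanBound}, so $\mathbb{F}_q$ and its additive cosets are automatically \emph{regular} cliques in the sense of Section~1.1, foreshadowing the canonical cliques whose characteristic functions are the main object of the paper.
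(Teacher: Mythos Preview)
Your argument is correct: the Hoffman bound computation and the Euler-criterion verification that $\mathbb{F}_q^*\subset(\mathbb{F}_{q^2}^*)^2$ are both valid and yield the two inequalities. There is nothing to compare against, however, because the paper does not prove this lemma; it is simply quoted from \cite[Lemma~5.9.1]{GM15} without argument. Your write-up is in fact the standard proof one finds in that reference, and the closing remark about $\mathbb{F}_q$ and its cosets being regular cliques is a nice anticipation of the later discussion of canonical cliques.
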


A characterisation of $q$-cliques in a Paley graph $P(q^2)$ was obtained in \cite{B84}; it was proved that such a clique is necessarily a quadratic line.

\subsection{Paley graphs of square order as block graphs of orthogonal arrays}

Given an odd prime power $q$, recall the construction of an orthogonal array $OA(q+1,q)$ based on the affine plane $A(2,q)$.
For the affine plane $A(2,q)$ defined by the finite field $\mathbb{F}_{q^2}$, all the $q+1$ slopes are given by the elements $\{1\} \cup \{c+\alpha~|~c \in \mathbb{F}_q\}$  (this set has exactly $\frac{q+1}{2}$ quadratic slopes; in particular, 1 is a quadratic slope). These $q+1$ slopes define an orthogonal array $OA(q+1,q)$ as follows. The rows are indexed by the slopes (by the lines through 0). The columns are indexed by the elements of $\mathbb{F}_{q^2}$. In order to define the orthogonal array, we need to assign an index with every line in the affine plane (the indexes are from $\mathbb{F}_q$). Given a line $\ell$ through 0, let us assign the index 0 with this line.

If $\ell = \mathbb{F}_q$, then an additive shift of $\ell$ is given by $\{c+j\alpha~|~c \in \mathbb{F}_q\}$ for some fixed $j \in \mathbb{F}_q$; we assign the index $j$ with this line.

If $\ell = \{c(i+\alpha)\}$ for some $i \in \mathbb{F}_q$, then an additive shift of $\ell$ is given by $\{j+c(i+\alpha)~|~c \in \mathbb{F}_q\}$ for some $j \in \mathbb{F}_q$; we assign the index $j$ with this line.

Thus, we have defined an index for every line in the affine plane (indexes for distinct parallel lines are different). Let us define an entry of the array lying in the row indexed by a line $\ell$ through 0 and the column indexed by an element $x+y\alpha$. We put the value of this entry to be the index of the line that is parallel to $\ell$ and contains the element $x+y\alpha$.
In this setting, any two rows of the array are orthogonal since every point of the plane uniquely determines a pair of intersecting lines from two fixed parallel classes.

\begin{lemma}\label{PaleyAsOAGraph}
Given an odd prime power $q$, the Paley graph $P(q^2)$ is the block graph of an orthogonal array $OA(\frac{q+1}{2},q)$.
\end{lemma}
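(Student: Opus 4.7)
The plan is to select, out of the $q+1$ slopes used to build $OA(q+1,q)$, only the $\frac{q+1}{2}$ quadratic slopes, form the corresponding sub-array, and show (i) it is still an orthogonal array, and (ii) its block graph coincides with $P(q^2)$. Step (i) is immediate: orthogonality of any two rows of the full $OA(q+1,q)$ is inherited by any sub-array of its rows, and $\frac{q+1}{2}<q+1$, so Lemma \ref{OAGraph} applies to the restricted array.

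For step (ii), I would first describe combinatorially when two columns of the restricted array are adjacent in its block graph. Two columns indexed by elements $u,v\in\mathbb{F}_{q^2}$ agree in the row indexed by a slope $s$ precisely when $u$ and $v$ lie on a common affine line of slope $s$, which happens iff $u-v\in \mathbb{F}_q^{\,*}\cdot s$. Hence the two columns are adjacent in the block graph iff the unique slope representative $s\in\{1\}\cup\{c+\alpha:c\in\mathbb{F}_q\}$ of the line through $u$ and $v$ is quadratic.

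The key algebraic fact to translate this into the Paley adjacency is that every element of $\mathbb{F}_q^{\,*}$ is a square in $\mathbb{F}_{q^2}^{\,*}$. This follows from the subgroup-index computation: the squares in $\mathbb{F}_{q^2}^{\,*}$ form a subgroup of order $\frac{q^2-1}{2}=\frac{(q-1)(q+1)}{2}$, which is divisible by $|\mathbb{F}_q^{\,*}|=q-1$ because $q$ is odd. Writing $u-v=cs$ with $c\in\mathbb{F}_q^{\,*}$, the element $u-v$ is therefore a non-zero square in $\mathbb{F}_{q^2}$ if and only if $s$ is a quadratic slope. Equivalently, $u$ and $v$ are adjacent in $P(q^2)$ iff they are adjacent in the block graph of the restricted array.

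I don't expect any substantial obstacle; the only subtleties are bookkeeping the slope representatives (which is fine since quadraticity is well-defined up to the $\mathbb{F}_q^{\,*}$-scalars appearing in slope normalization) and the index calculation $\mathbb{F}_q^{\,*}\subseteq (\mathbb{F}_{q^2}^{\,*})^{2}$, both of which are elementary. Putting these observations together identifies $P(q^2)$ with the block graph of $OA(\frac{q+1}{2},q)$, proving the lemma.
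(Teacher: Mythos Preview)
Your proposal is correct and follows exactly the paper's approach: restrict the $OA(q+1,q)$ built from $A(2,q)$ to the $\frac{q+1}{2}$ rows indexed by quadratic slopes and identify the resulting block graph with $P(q^2)$. The paper's own proof is essentially a one-line assertion of this identification; you have supplied the details it leaves implicit, in particular the inclusion $\mathbb{F}_q^{\,*}\subseteq(\mathbb{F}_{q^2}^{\,*})^2$ (your divisibility argument is valid once one notes that $\mathbb{F}_{q^2}^{\,*}$ is cyclic, so the unique subgroup of order $q-1$ sits inside the index-$2$ subgroup of squares).
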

\begin{proof}
In the above construction of the orthogonal array $OA(q+1,q)$, we take the $\frac{q+1}{2}$ rows that correspond to the quadratic slopes; these rows form an orthogonal array $OA(\frac{q+1}{2},q)$. The block graph of this array is equal to the Paley graph $P(q^2)$.
\end{proof}

\subsection{Eigenfunctions and balanced characteristic functions of cliques in Paley graphs of square order}
Given a graph $\Gamma = (V,E)$, a function $f:V\rightarrow \mathbb{R}$ is called a $\theta$-eigenfunction, if $f \not\equiv 0$ and the equality
\begin{equation}\label{eigenfunction}
\theta f(\gamma) = \sum\limits_{\delta \in N(\gamma)}f(\delta)
\end{equation}
holds for every vertex $\gamma \in V$, where $N(\gamma)$ denotes the neighbourhood of $\gamma$.

We say that a vector in $\mathbb{R}^n$ is \emph{balanced} if it is orthogonal to the all-ones vector \textbf{1}. If $v_S$ is the characteristic vector of a subset $S$ of the set $V$,
then we say that
$$
v_S - \frac{|S|}{|V|}\textbf{1}
$$
is the \emph{balanced characteristic vector (function)} of $S$ (see \cite[p. 26]{GM15}).

Let $S$ be a $q$-clique in a Paley graph $P(q^2)$. The balanced characteristic vector (function) $f_S$ is given by
$$
f_S(\gamma):=
\left\{
  \begin{array}{ll}
    \frac{q-1}{q}, & \hbox{if~$\gamma \in S$;} \\
    -\frac{1}{q}, & \hbox{otherwise.}
  \end{array}
\right.
$$
The normalised balanced function $qf_S$ is given by
$$
qf_S(\gamma):=
\left\{
  \begin{array}{ll}
    q-1, & \hbox{if~$\gamma \in S$;} \\
    -1, & \hbox{otherwise.}
  \end{array}
\right.
$$

\begin{lemma}
Given a $q$-clique $S$ in a Paley graph $P(q^2)$, its balanced characteristic vector lies in the $\frac{-1+q}{2}$-eigenspace of $P(q^2)$.
\end{lemma}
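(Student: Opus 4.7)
The plan is to invoke the regularity of the clique $S$ guaranteed by the Hoffman bound (Lemma \ref{HoffmanBound}) and then verify the eigenfunction equation \eqref{eigenfunction} on the two vertex classes determined by $S$.

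First I would record the standard parameters of the Paley graph $P(q^2)$: it is strongly regular with $(v,k,\lambda,\mu)=(q^2,\tfrac{q^2-1}{2},\tfrac{q^2-5}{4},\tfrac{q^2-1}{4})$ and smallest eigenvalue $-m = -\tfrac{q+1}{2}$. A direct computation shows $1+k/m = q$, which by Lemma \ref{CliqueNumberForPaleyGraphs} is attained by $S$. Lemma \ref{HoffmanBound} then forces $S$ to be regular, with every vertex outside $S$ having exactly $\mu/m = \tfrac{q-1}{2}$ neighbours in $S$.

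Next I would check \eqref{eigenfunction} pointwise for $f = qf_S$ (it suffices to prove it up to the scalar $q$). For $\gamma \in S$, the neighbourhood contains $q-1$ further vertices of $S$ and the remaining $k-(q-1)=\tfrac{(q-1)^2}{2}$ neighbours lie outside, so
\[
\sum_{\delta \in N(\gamma)} f(\delta) = (q-1)(q-1) - \tfrac{(q-1)^2}{2} = \tfrac{(q-1)^2}{2} = \tfrac{q-1}{2}\cdot f(\gamma).
\]
For $\gamma \notin S$, the regularity gives $\tfrac{q-1}{2}$ neighbours inside $S$ and $k-\tfrac{q-1}{2}=\tfrac{q(q-1)}{2}$ outside, whence
\[
\sum_{\delta \in N(\gamma)} f(\delta) = (q-1)\cdot\tfrac{q-1}{2} - \tfrac{q(q-1)}{2} = -\tfrac{q-1}{2} = \tfrac{q-1}{2}\cdot f(\gamma).
\]
Since $f_S$ is nonzero, this establishes that $f_S$ is a $\tfrac{-1+q}{2}$-eigenfunction.

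There is no real obstacle here; the only thing to get right is the numerology, in particular the identification $1+k/m=q$ and $\mu/m=\tfrac{q-1}{2}$, which is what couples the Hoffman bound to the Paley graph parameters and makes the case analysis collapse to the desired eigenvalue. Everything else is a two-line bookkeeping exercise.
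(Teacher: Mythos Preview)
Your argument is correct and is essentially identical to the paper's: both verify the eigenfunction equation \eqref{eigenfunction} for the normalised function $qf_S$ by the same two-case count, using that $S$ attains the Hoffman bound and hence is regular with nexus $\tfrac{q-1}{2}$. The only cosmetic difference is that you spell out the derivation of $1+k/m=q$ and $\mu/m=\tfrac{q-1}{2}$ from the Paley parameters, whereas the paper simply invokes the regularity of $S$ directly.
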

\begin{proof}
It suffices to check the condition (\ref{eigenfunction}) for the eigenvalue $\theta = \frac{-1+q}{2}$ and the normalised balanced function $qf_S$.

Take a vertex $\gamma \in S$. The left-hand side of (\ref{eigenfunction}) is equal to $\frac{(q-1)^2}{2}$.
On the other hand, $\gamma$ has $q-1$ neighbours in $S$  (with values $q-1$) and $\frac{q^2-1}{2} - (q-1)$ neighbours not in $S$ (with values -1). Thus, the right-hand side of (\ref{eigenfunction}) is equal to $(q-1)^2 - \frac{q^2-1}{2} + (q-1) = \frac{(q-1)^2}{2}$.

Take a vertex $\gamma \notin S$. The left-hand side of (\ref{eigenfunction}) is equal to $\frac{1-q}{2}$. On the other hand, since $S$ is a regular clique, $\gamma$ has $\frac{q-1}{2}$ neighbours in $S$ (with values $q-1$) and $\frac{q^2-1}{2} - \frac{q-1}{2}$ neighbours not in $S$ (with values -1). Thus, the right-hand side of (\ref{eigenfunction}) is equal to $\frac{(q-1)^2}{2} - \frac{q^2-1}{2} + \frac{q-1}{2} = \frac{1-q}{2}$.
\end{proof}

In this paper we solve Problem \ref{problem1}, which is related to a possible approach to the main theorem from \cite{B84} (see \cite[Section 16.5]{GM15}).

\begin{problem}[{\cite[Problem 16.5.1]{GM15}}]\label{problem1}
Show that $\frac{-1+q}{2}$-eigenspace of $P(q^2)$ is spanned by the balanced characteristic vectors of the canonical cliques.
\end{problem}

However, in order to have a second proof of Blokhuis's result from \cite{B84}, which is an analogue of Erd\"os-Ko-Rado theorem for Paley graphs of square order, we still need to solve Problem \ref{problem2}.

\begin{problem}[{\cite[Problem 16.5.2]{GM15}}]\label{problem2}
Prove that the only balanced characteristic vectors of sets of size $q$, in the eigenspace belonging to $\frac{-1+q}{2}$ of $P(q^2)$, are the balanced characteristic vectors of the canonical cliques.
\end{problem}

\section{Eigenfunctions of the block graphs of orthogonal arrays}
\subsection{A basis for a $(n-m)$-eigenspace of the block graph of an orthogonal array $OA(m,n)$}
A strongly regular graph $\Gamma = (V,E)$ defined as the block graph of an orthogonal array $OA(m,n)$ has $m$ partitions of the vertex set into canonical cliques of size $n$. Denote the partitions by $\Pi_1, \ldots, \Pi_m$. Let $\Pi_j = (S_{j1}, S_{j2}, \ldots, S_{jn})$ be such a partition for some $j \in \{1,\ldots, m\}$. Fix a clique from this partition, say $S_{j1}$. Given a positive integer $i \in \{2,\ldots, n\}$, define a function $f_{ji}:V \rightarrow \mathbb{R}$ as follows.
For a vertex $\gamma \in V$, put
$$
f_{ji}(\gamma):=
\left\{
  \begin{array}{ll}
    1, & \hbox{if~$\gamma \in S_{j1}$;} \\
    -1, & \hbox{if~$\gamma \in S_{ji}$;} \\
    0, & \hbox{otherwise.}
  \end{array}
\right.
$$

\begin{lemma}
For any $j \in \{1, \ldots, m\}$, $i \in \{2, \ldots, n\}$, the function $f_{ji}$ is an eigenfunction of $\Gamma$ corresponding to its largest non-principal eigenvalue $n-m$.
\end{lemma}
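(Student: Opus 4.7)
The plan is to directly verify the eigenfunction equation
\[
(n-m)\,f_{ji}(\gamma) = \sum_{\delta \in N(\gamma)} f_{ji}(\delta)
\]
for every vertex $\gamma \in V$ by case analysis on the position of $\gamma$ relative to the partition $\Pi_j$. Since $\Pi_j$ partitions $V$ and $f_{ji}$ is supported on $S_{j1} \cup S_{ji}$ (which are disjoint, as distinct cliques in the same partition), there are exactly three cases to consider: $\gamma \in S_{j1}$, $\gamma \in S_{ji}$, and $\gamma \in S_{jk}$ for some $k \notin \{1,i\}$.

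The two structural facts I will use repeatedly are: each canonical clique has size $n$, so a vertex lying in it has $n-1$ neighbours inside it; and by Lemma \ref{HoffmanBound} applied to the parameters from Lemma \ref{OAGraph}, each canonical clique of size $n = 1 + k/m$ is regular, so every vertex outside the clique has exactly $\mu/m = m-1$ neighbours inside it. These two counts, together with the values $\pm 1$ taken by $f_{ji}$ on $S_{j1}$ and $S_{ji}$, determine the right-hand side in each case.

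The verification is then a short arithmetic check. For $\gamma \in S_{j1}$, the right-hand side is $(n-1)\cdot 1 + (m-1)\cdot(-1) = n-m$, matching $(n-m)\cdot 1$ on the left. For $\gamma \in S_{ji}$, symmetrically, the right-hand side is $(m-1)\cdot 1 + (n-1)\cdot(-1) = m-n$, matching $(n-m)\cdot(-1)$. For $\gamma$ in some other clique of $\Pi_j$, the right-hand side is $(m-1)\cdot 1 + (m-1)\cdot(-1) = 0$, matching $(n-m)\cdot 0$.

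There is no real obstacle here; the only subtle point is to justify why $\gamma \notin S_{j1}$ (or $\notin S_{ji}$) has exactly $m-1$ neighbours in that clique, which is precisely the regularity content of Lemma \ref{HoffmanBound} tied to tightness in the Hoffman bound for $|S| = n$. Finally, $f_{ji} \not\equiv 0$ is immediate, so $f_{ji}$ is genuinely an eigenfunction for the eigenvalue $n-m$.
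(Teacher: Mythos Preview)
Your proof is correct and follows essentially the same approach as the paper's: a direct verification of the eigenfunction equation by the same three-case analysis on the position of $\gamma$ relative to $S_{j1}$ and $S_{ji}$, using that each canonical clique has size $n$ and that a vertex outside such a clique has exactly $m-1$ neighbours in it. Your write-up is slightly more explicit in invoking Lemma~\ref{HoffmanBound} for the $m-1$ count and in noting $f_{ji}\not\equiv 0$, but the argument is the same.
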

\begin{proof}
It suffices to check the condition (\ref{eigenfunction}) for the eigenvalue $\theta = n-m$ and the function $f_{ji}$.

Let $\gamma$ be a vertex from $S_{j1}$. The left-hand side of (\ref{eigenfunction}) is equal to $n-m$. On the other hand, $\gamma$ has $n-1$ neighbours in $S_{j1}$ (with values 1) and $m-1$ neighbours in $S_{ji}$ (with values -1). Thus, the right-hand side of (\ref{eigenfunction}) is equal to $(n-1)-(m-1) = n-m$.

In the same manner, the equality (\ref{eigenfunction}) holds for any vertex $\gamma$ from $S_{ji}$.

Let $\gamma$ be a vertex that is not from $S_{j1} \cup S_{ji}$. Then $\gamma$ has $m-1$ neighbours in $S_{j1}$ (with values 1) and $m-1$ neighbours in $S_{ji}$ (with values -1). Thus, the equality (\ref{eigenfunction}) holds.
\end{proof}

\begin{lemma}\label{MOLSBasis}
The functions $$f_{12}, f_{13}, \ldots, f_{1n}, f_{22}, f_{23}, \ldots, f_{2n}, \ldots, f_{m2}, f_{m3}, \ldots, f_{mn}$$ form a basis of the eigenspace of $\Gamma$ corresponding to the largest non-principal eigenvalue $n-m$.
\end{lemma}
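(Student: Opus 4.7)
The plan is to reduce the problem to linear independence via a dimension count, and then verify linear independence by examining the Gram matrix of the candidate basis. By Lemma \ref{OAGraph}, the eigenvalue $n-m$ of $\Gamma$ has multiplicity exactly $m(n-1)$; the proposed family contains precisely $m(n-1)$ functions, each of which already lies in the $(n-m)$-eigenspace by the lemma just proved. So it suffices to show that these $m(n-1)$ functions are linearly independent.

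To establish independence, I would compute the Gram matrix $G = (\langle f_{ji}, f_{j'i'}\rangle)$ with respect to the standard inner product on $\mathbb{R}^V$, organised into $m \times m$ blocks of size $(n-1)\times(n-1)$ indexed by the pairs $(j,j')$. For a diagonal block $j = j'$, direct evaluation from the definition gives $\langle f_{ji}, f_{ji}\rangle = 2n$ (the integrand is supported on $S_{j1}\cup S_{ji}$ and is constantly $1$ there) and $\langle f_{ji}, f_{ji'}\rangle = n$ for $i \neq i'$ (only the common clique $S_{j1}$, where both functions take value $+1$, contributes). Consequently each diagonal block equals $nI + nJ$, which is positive definite.

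The off-diagonal blocks $j \neq j'$ carry the combinatorial content: here I would invoke the orthogonal array property, which says that rows $j$ and $j'$ together cover every ordered pair of values in $\{1,\ldots,n\}$ exactly once, equivalently $|S_{ja} \cap S_{j'b}| = 1$ for all $a,b$. Expanding $f_{ji}(\gamma) f_{j'i'}(\gamma)$ into four indicator products and summing over $\gamma$, each of the four intersections $S_{j,1}\cap S_{j',1}$, $S_{j,1}\cap S_{j',i'}$, $S_{j,i}\cap S_{j',1}$, $S_{j,i}\cap S_{j',i'}$ contributes a single vertex, and the corresponding signs $+1, -1, -1, +1$ cancel, so the entire block vanishes.

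Therefore $G$ is block-diagonal with positive definite blocks, hence non-singular, which forces linear independence and, combined with the dimension count, completes the proof. The one non-routine ingredient is the vanishing of the off-diagonal blocks, which reduces precisely to the pairwise coverage axiom of the orthogonal array; the remaining steps are direct evaluations.
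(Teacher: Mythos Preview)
Your proof is correct and follows essentially the same approach as the paper: both combine the dimension count from Lemma~\ref{OAGraph} with the orthogonality of $f_{j_1 i_1}$ and $f_{j_2 i_2}$ for $j_1\neq j_2$ (your vanishing off-diagonal blocks) and the linear independence of the $f_{ji}$ for fixed $j$ (your positive-definite diagonal blocks $nI+nJ$). The paper merely asserts these two facts without computation, whereas you verify them explicitly via the Gram matrix.
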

\begin{proof}
It follows from Lemma \ref{OAGraph} and two facts. The first fact is that functions $f_{j_1i_1}$ and $f_{j_2i_2}$ are orthogonal if $j_1 \ne j_2$. The second fact is that, given $j \in \{1,\ldots, m\}$, the functions $$f_{j1}, f_{j2}, \ldots, f_{jn}$$
are linearly independent.
\end{proof}

\subsection{A basis for $\frac{-1+q}{2}$-eigenspace of $P(q^2)$ and its application}
The next lemma follows from Lemmas \ref{PaleyAsOAGraph} and \ref{MOLSBasis}.
\begin{lemma}\label{PaleyBasis}
Given an odd prime power $q$, the functions
$$f_{12}, f_{13}, \ldots, f_{1q}, f_{22}, f_{23}, \ldots, f_{2q}, \ldots, f_{\frac{q+1}{2}2}, f_{\frac{q+1}{2}3}, \ldots, f_{\frac{q+1}{2}q}$$ form a basis of the eigenspace of $P(q^2)$ corresponding to the largest non-principal eigenvalue $\frac{-1+q}{2}$.
\end{lemma}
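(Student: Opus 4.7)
The plan is to obtain this as a direct specialization of Lemma \ref{MOLSBasis}, using Lemma \ref{PaleyAsOAGraph} to identify $P(q^2)$ with the block graph of an orthogonal array. First, I would invoke Lemma \ref{PaleyAsOAGraph} to realize $P(q^2)$ as the block graph of $OA(\tfrac{q+1}{2}, q)$. This fixes the parameters $m = \tfrac{q+1}{2}$ and $n = q$ when applying Lemma \ref{OAGraph}, so the largest non-principal eigenvalue is
$$
n - m \;=\; q - \tfrac{q+1}{2} \;=\; \tfrac{-1+q}{2},
$$
matching the eigenvalue named in the statement. Lemma \ref{OAGraph} further tells us that this eigenspace has dimension $m(n-1) = \tfrac{q+1}{2}(q-1)$, which equals the number of functions listed in the statement (namely $\tfrac{q+1}{2}$ values of $j$ times $q-1$ values of $i$); this consistency check ensures the candidate family has exactly the right cardinality to form a basis.

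Second, I would apply Lemma \ref{MOLSBasis} to this block graph with the same parameters: it yields a basis of the $(n-m)$-eigenspace consisting of the functions $f_{ji}$ for $j \in \{1,\ldots, m\}$, $i \in \{2,\ldots, n\}$. Substituting $m = \tfrac{q+1}{2}$ and $n = q$ produces exactly the list in the statement, so no further work is required. In this sense there is no real obstacle: the only thing to verify is the arithmetic identification of eigenvalues and the parameter translation between the OA realization and the Paley graph; linear independence, orthogonality across different parallel classes, and dimension are all contained in Lemma \ref{MOLSBasis}.
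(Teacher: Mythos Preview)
Your proposal is correct and matches the paper's own proof, which simply notes that the lemma follows from Lemmas~\ref{PaleyAsOAGraph} and~\ref{MOLSBasis}. Your extra arithmetic checks (eigenvalue $n-m=\tfrac{-1+q}{2}$ and dimension $m(n-1)=\tfrac{q^2-1}{2}$) are helpful elaborations of that one-line deduction.
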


Given a partition $\Pi_j = \{S_{j1}, S_{j2}, \ldots, S_{jn}\}$ for some $j \in \{1,\ldots, k\}$ and a positive integer $i \in \{1, \ldots, q\}$, define a function $g_{ji}: V(P(q^2)) \rightarrow \mathbb{R}$ by the following rule. For any $\gamma \in V(P(q^2))$,
put
$$
g_{ji}(\gamma):=
\left\{
  \begin{array}{ll}
    q-1, & \hbox{if~$\gamma \in S_{ji}$;} \\
    -1, & \hbox{otherwise.}
  \end{array}
\right.
$$
Note that the a function $g_{ji}$ is a $\frac{-1+q}{2}$-eigenfunction of $P(q^2)$, which is equal to the normalised balanced characteristic function $qf_{S_{ji}}$ of the clique $S_{ji}$.

\begin{proposition}
The functions
$$
g_{11}, g_{12}, \ldots, g_{1q}, g_{21}, g_{22}, \ldots, g_{2q}, \ldots, g_{\frac{q+1}{2}1}, g_{\frac{q+1}{2}2}, \ldots, g_{\frac{q+1}{2}q}.
$$
span the eigenspace of $P(q^2)$ corresponding to the eigenvalue $\frac{-1+q}{2}$.
\end{proposition}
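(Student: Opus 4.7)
The plan is to reduce the claim to the explicit basis of the $\frac{-1+q}{2}$-eigenspace furnished by Lemma \ref{PaleyBasis} and show that each basis function $f_{ji}$ lies in the span of the $g_{ji}$'s. Since $g_{ji}$ is already known to be a $\frac{-1+q}{2}$-eigenfunction (noted just before the statement), this is exactly what is needed: the span of the $g$'s is a subspace of the eigenspace that contains a basis of it, hence equals it.

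The key step is a pointwise identity. For a fixed parallel class index $j \in \{1,\ldots,\tfrac{q+1}{2}\}$ and $i \in \{2,\ldots,q\}$, I would evaluate $g_{j1}-g_{ji}$ on the three types of vertices:
$$
(g_{j1}-g_{ji})(\gamma) =
\begin{cases}
(q-1)-(-1) = q, & \gamma \in S_{j1},\\
(-1)-(q-1) = -q, & \gamma \in S_{ji},\\
(-1)-(-1) = 0, & \gamma \notin S_{j1}\cup S_{ji}.
\end{cases}
$$
Comparing with the definition of $f_{ji}$, this yields
$$
f_{ji} \;=\; \tfrac{1}{q}\bigl(g_{j1}-g_{ji}\bigr).
$$
Running over all admissible $(j,i)$ expresses every function in the basis of Lemma \ref{PaleyBasis} as a linear combination of the $g_{ji}$'s, so those functions span the eigenspace.

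There is no real obstacle here; the argument is entirely linear-algebraic once the identity above is observed. It is worth noting that the $g_{ji}$'s are redundant: the number of functions listed is $\tfrac{q(q+1)}{2}$, which exceeds the eigenspace dimension $\tfrac{q^2-1}{2}$ from Lemma \ref{OAGraph}, the redundancy being accounted for by the relations $\sum_{i=1}^{q} g_{ji} \equiv 0$ within each parallel class (each vertex meets exactly one clique of $\Pi_j$, so the sum contributes $(q-1)+(q-1)(-1)=0$ at every point). This confirms that the collection spans but is not a basis, which is consistent with the statement of the proposition.
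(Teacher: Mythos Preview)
Your argument is correct and matches the paper's proof essentially verbatim: both invoke Lemma~\ref{PaleyBasis} and the identity $f_{ji}=\tfrac{1}{q}(g_{j1}-g_{ji})$ to conclude that the $g_{ji}$'s span the eigenspace. Your additional remark on the redundancy relation $\sum_{i=1}^{q} g_{ji}=0$ is precisely what the paper records separately in the subsequent corollary.
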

\begin{proof}
It follows from Lemma \ref{PaleyBasis} and the fact that, for any $j \in \{1, \ldots, \frac{q+1}{2}\}$ and $i \in \{2, \ldots, q\}$, the equality
$$
f_{ji} = \frac{1}{q} (g_{j1} - g_{ji})
$$
holds.
\end{proof}

\begin{corollary}
The $\frac{q^2-1}{2}$ functions
$$
g_{12}, g_{13}, \ldots, g_{1q}, g_{22}, g_{23}, \ldots, g_{2q}, \ldots, g_{\frac{q+1}{2}2}, g_{\frac{q+1}{2}3}, \ldots, g_{\frac{q+1}{2}q}.
$$
form a basis of the the eigenspace of $P(q^2)$ corresponding to the eigenvalue $\frac{-1+q}{2}$.
\end{corollary}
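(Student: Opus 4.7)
The plan is to combine a dimension count with a single linear relation. From Lemma \ref{PaleyBasis}, the $\frac{-1+q}{2}$-eigenspace of $P(q^2)$ has dimension $\frac{q+1}{2}(q-1) = \frac{q^2-1}{2}$, which exactly matches the number of functions listed in the corollary. Therefore it suffices to show that these $\frac{q^2-1}{2}$ functions span the eigenspace; linear independence will then follow automatically from the dimension equality.

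To establish spanning, I would invoke the preceding proposition, which already says that the full collection of $\frac{q(q+1)}{2}$ functions $g_{ji}$ with $i\in\{1,\ldots,q\}$ spans the eigenspace. Thus it only remains to express each of the omitted functions $g_{j1}$ (for $j\in\{1,\ldots,\frac{q+1}{2}\}$) as a linear combination of the functions with $i\ge 2$ from the same partition $\Pi_j$.

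The key observation is the partition identity
\[
\sum_{i=1}^{q} g_{ji} \equiv 0
\]
for every fixed $j$. Indeed, any vertex $\gamma$ belongs to exactly one clique of the partition $\Pi_j = \{S_{j1},\ldots,S_{jq}\}$, so it contributes $q-1$ to precisely one term of the sum and $-1$ to each of the remaining $q-1$ terms, giving $(q-1)+(q-1)(-1)=0$. Rearranging yields $g_{j1} = -\sum_{i=2}^{q} g_{ji}$, which is the desired expression.

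There is no real obstacle here: the argument is a pure bookkeeping step built on the earlier proposition and Lemma \ref{PaleyBasis}. The only point worth double-checking is the counting, namely that $\frac{q+1}{2}\cdot(q-1) = \frac{q^2-1}{2}$ indeed equals both the dimension of the eigenspace and the cardinality of the listed family, so that spanning automatically upgrades to a basis.
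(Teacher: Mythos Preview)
Your proposal is correct and follows essentially the same approach as the paper: both use the partition identity $\sum_{i=1}^{q} g_{ji}=0$ to drop the functions $g_{j1}$ from the spanning set given by the proposition, and then match the count $\frac{q+1}{2}(q-1)=\frac{q^2-1}{2}$ against the eigenspace dimension to conclude that the remaining functions form a basis. Your version is slightly more explicit in verifying the identity pointwise, but the logic is identical.
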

\begin{proof}
Note that, for any $j \in \{1, \ldots, \frac{q+1}{2}\}$, the equality
$$
g_{j1} + g_{j2} + \ldots + g_{jq} = 0
$$
holds. It means that the $\frac{q^2-1}{2}$ functions
$$
g_{12}, g_{13}, \ldots, g_{1q}, g_{22}, g_{23}, \ldots, g_{2q}, \ldots, g_{\frac{q+1}{2}2}, g_{\frac{q+1}{2}3}, \ldots, g_{\frac{q+1}{2}q}.
$$
still span the $\frac{-1+q}{2}$-eigenspace of $P(q^2)$ and thus form a basis since the $\frac{-1+q}{2}$-eigenspace has dimension $\frac{q^2-1}{2}$.
\end{proof}

\section*{Acknowledgments}
Sergey Goryainov is supported by RFBR according to the research project 20-51-53023. Sergey Goryainov is grateful to Alexander Gavrilyuk and Alexandr Valyuzhenich for valuable discussions concerning the paper. Huiqiu Lin is supported by National Natural Science Foundation of China (Nos. 11771141 and 12011530064)

\end{document}